\documentclass[11pt]{amsart}
\usepackage{amsmath,amssymb,amsthm,amscd}\usepackage[usenames]{color}
\usepackage[all]{xy}
\usepackage{enumerate}
\xyoption{all}

%%%%%%%%%%%%%%%%%%%%%%%%%%%%%%%%%%%%%%%%%%%%%%%%%
\newtheorem{thm}{Theorem}[section]
\newtheorem{lem}[thm]{Lemma}

\newtheorem{prop}[thm]{Proposition}

\theoremstyle{definition}
\newtheorem{rem}[thm]{Remark}

%%%%%%%%%%%%%%%%%%%%%%%%%%%%%%%%%%%%%%%%%%%%%%%%%%%%%%%%
%% MACROS%%%%%%%%
% scripts

\newcommand{\cO}{{\mathcal O}}

\newcommand{\cA}{{\mathcal A}}

\newcommand{\cP}{{\mathcal P}}
\newcommand{\cR}{{\mathcal R}}

\newcommand{\lra}{\longrightarrow}
\newcommand{\ra}{\rightarrow}

\newcommand{\PP}{{\mathbb P}}

\DeclareMathOperator{\Sym}{Sym}

\DeclareMathOperator{\Ker}{Ker}
\DeclareMathOperator{\Nm}{Nm}
\DeclareMathOperator{\Pic}{Pic}

\pagestyle{myheadings}

\title[Global Prym-Torelli for $r\ge 6$]{Global Prym-Torelli  for double coverings ramified in at least $6$ points}

\author[J.C. Naranjo and A. Ortega]{Juan Carlos Naranjo and Angela Ortega}

\address{J.C. Naranjo \\ Departament de Matem\`atiques i Inform\`atica \\ Universitat de Barcelona \\Spain }
\email{jcnaranjo@ub.edu}
              
\address{A. Ortega \\ Institut f\"ur Mathematik, Humboldt Universit\"at zu Berlin \\ Germany}
\email{ortega@math.hu-berlin.de}

\thanks{The first author was partially supported by the Proyecto de Investigaci\'on MTM2015-65361-P}
\date{}
\begin{document}

\begin{abstract}
 We prove that the ramified Prym map $\mathcal P_{g, r}$ which sends a covering $\pi:D\lra C$ ramified in $r$ points to the Prym variety $P(\pi):=\Ker(\Nm_{\pi })$ 
 is an embedding for all $r\ge 6$ and for all $g(C)>0$. Moreover, by studying the restriction to the locus of coverings of hyperelliptic curves, we show that 
 $\mathcal P_{g, 2}$ and $\mathcal P_{g, 4}$ have positive dimensional fibers.
 \end{abstract}

 \maketitle
 
\section{Introduction}

The ramified Prym map $\mathcal P_{g,r}$ assigns to a degree $2$ morphism  $\pi: D \lra C$ of smooth complex irreducible curves  ramified in $r>0$ 
points, a polarized abelian variety $P(\pi)$ of dimension $g - 1 + \frac r2 $, where $g$ is the genus of $C$. We assume that $g>0$ throughout  the paper.  

The variety $P(\pi)$ is called the Prym variety of $\pi$ and is defined as the kernel of the norm map $\Nm_{\pi }:JD \lra JC.$ 
Hence, by denoting by $\cR_{g,r} $ the moduli space of  isomorphism classes of the morphisms $\pi$, we have maps:
\begin{eqnarray*}
\cP_{g,r} : \cR_{g,r} & \lra & \cA^\delta_{g-1+\frac{r}{2}},
\end{eqnarray*}
to the moduli space of abelian varieties of dimension  $g-1+\frac{r}{2}$ with polarization type
 $\delta:=(1,\ldots, 1,2, \ldots ,2)$, with $2$  repeated $g$ times.

This is the analogous version of the classical (or ``unramified'') Prym maps ($r=0$) and has  attracted considerable attention in the last years. 
Combining the results 
contained in \cite{mp}, \cite{mn}, \cite{no} and \cite{nr} we know that the ramified Prym map is generically injective as far as the dimension of $\cR_{g,r}$ is 
less or equal to the dimension of the dimension of  $\cA^\delta_{g-1+\frac{r}{2}}$ with only one exception: when $r=4, g=3$ the map has degree $3$ 
(see \cite{nr}, \cite{bcv}).

One could expect that, like in the unramified case, generalized tetragonal constructions (as defined by Donagi in \cite{do}) or other procedures  
(as, for example, in \cite{il}, \cite{lo}) would 
give examples of non injectivity for these maps. In fact, the computation of the degree of $\cP_{3,4}$  in \cite{nr} is based on a tetragonal construction 
for coverings ramified in $4$ points,  such that the branch divisor belongs to the tetragonal linear series. For this reason, the recent work of Ikeda (see \cite{ik}) is
 somehow unexpected: he proves that 
$\mathcal P_{1,r}$ is injective for $r\ge 6$. Hence it is natural to explore if, contrary to the initial intuition inspired by the classical case, a global 
Torelli theorem holds for $\mathcal P_{g, r}$ for some other values of $g$ and $r$. The aim of this article is to answer completely this question by proving 
the following results:

\begin{thm}\label{main_embedding}
 The Prym map $\mathcal P_{g,r}$ is an embedding for all $r\ge 6$ and  all $g>0$. 
\end{thm}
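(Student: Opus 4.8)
The plan is to verify separately the two conditions one checks for $\cP_{g,r}$ to be an embedding: that its differential is everywhere injective (so $\cP_{g,r}$ is an immersion) and that it is injective on points. \textbf{The differential.} Write $\pi\colon D\to C$ as the covering determined by a line bundle $\eta$ on $C$ with $\eta^{\otimes 2}\cong\cO_C(B)$, where $B$ is the branch divisor, $\deg B=r$. The $(-1)$-eigenspace of the covering involution on $H^0(D,\omega_D)$ is $H^0(C,\omega_C\otimes\eta)$, canonically identified with $H^0(P(\pi),\Omega^1)$; hence the cotangent space of $\cA^\delta$ at $P(\pi)$ is $\Sym^2 H^0(C,\omega_C\otimes\eta)$, the cotangent space of $\cR_{g,r}$ at $[\pi]$ is $H^0(C,\omega_C^{\otimes 2}(B))=H^0\bigl(C,(\omega_C\otimes\eta)^{\otimes 2}\bigr)$, and the codifferential of $\cP_{g,r}$ is the multiplication map
\[
\mu\colon\ \Sym^2 H^0(C,\omega_C\otimes\eta)\ \lra\ H^0\bigl(C,(\omega_C\otimes\eta)^{\otimes 2}\bigr).
\]
Thus $\cP_{g,r}$ is an immersion at $[\pi]$ iff $\mu$ is surjective. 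Since $\deg(\omega_C\otimes\eta)=2g-2+\tfrac r2$, which is $\ge 2g+1$ exactly when $r\ge 6$, the classical Castelnuovo--Mumford theorem (valid for every curve, hyperelliptic or not) shows that $\omega_C\otimes\eta$ is normally generated, so $\mu$ is surjective for \emph{every} covering once $r\ge 6$. This is precisely where the hypothesis enters, and it matches the failure at $r=2,4$, where $\deg(\omega_C\otimes\eta)\le 2g$ and normal generation can break on hyperelliptic curves --- the locus the abstract singles out.

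\textbf{Injectivity on points.} I would argue by induction on $g$, the base case $g=1$ being Ikeda's theorem \cite{ik}. Let $g\ge 2$ and suppose $\cP_{g,r}(\pi_1)=\cP_{g,r}(\pi_2)$ with $\pi_i\colon D_i\to C_i$ in $\cR_{g,r}$. The idea is to degenerate both coverings, inside a suitable partial compactification of $\cR_{g,r}$, to the boundary locus where the base curve becomes a curve of compact type with an elliptic (or lower genus) tail, so that the Prym remains an abelian variety while the essential component drops to genus $g-1$ and the number of ramification points stays $\ge 6$. One then needs: (i) that the Prym map extends over this boundary divisor and that its restriction there is, after the evident identifications, a ramified Prym map $\cP_{g-1,r'}$ with $r'\ge 6$, injective by the inductive hypothesis; and (ii) a monodromy/connectedness argument ensuring that the fiber of $\cP_{g,r}$ through $\pi_1$ --- which is finite by the immersion property just established --- is determined by its specializations to the boundary, forcing $\pi_1\cong\pi_2$. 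An alternative, staying in the interior: for $r\ge 6$ the Prym-canonical map is a projectively normal embedding $C\hookrightarrow\PP\bigl(H^0(P(\pi),\Omega^1)^\vee\bigr)$, and one reconstructs this embedded curve --- hence $C$, then $\eta$, then $\pi$ --- from the theta divisor $\Xi\subset P(\pi)$ via its Gauss map and the Abel--Prym curve in the classical style; combined with the generic injectivity of \cite{mp,mn,no,nr} it then suffices to see that the reconstruction does not degenerate on the exceptional loci.

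I expect the injectivity statement to be the real obstacle, and within it the delicate point is controlling the non-generic strata --- coverings of hyperelliptic, trigonal and bielliptic curves. These are exactly where the generic-injectivity arguments of \cite{mp,mn,no,nr} break down, and where genuine positive-dimensional fibers appear for $r=2,4$, so one must show that for $r\ge 6$ they contribute no extra points to any fiber: either via the explicit description of Pryms of coverings of hyperelliptic curves (the same computation that produces the counterexamples for small $r$, now with a positive outcome) or via a careful study of the boundary behaviour of $\cP_{g,r}$ along these loci.
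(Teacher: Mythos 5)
Your treatment of the differential is correct and coincides in substance with the paper's: the codifferential is the multiplication map $\Sym^2 H^0(C,\omega_C\otimes\eta)\to H^0(C,(\omega_C\otimes\eta)^{\otimes 2})$, and since $\deg(\omega_C\otimes\eta)=2g-2+\tfrac r2\ge 2g+1$ exactly when $r\ge 6$, normal generation (Castelnuovo--Mumford in your version, Green--Lazarsfeld in the paper's, which amounts to the same bound once one notes $h^1=0$ and $\mathrm{Cliff}(C)\ge 0$) gives surjectivity for \emph{every} covering. So the immersion half is fine.

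The injectivity half, however, is where the theorem lives, and what you offer there is a plan rather than a proof, with gaps that are not routine to fill. In the degeneration scheme, the key unproved point is hidden in your step (ii): from $\cP_{g,r}(\pi_1)=\cP_{g,r}(\pi_2)$ you cannot simply "degenerate both coverings" compatibly --- the fiber is a finite set of isolated points, and to conclude anything at the boundary you would need the degree of $\cP_{g,r}$ onto its image to be constant under specialization, i.e.\ properness or a precise control of the indeterminacy of the extended Prym map along the chosen boundary divisor; none of this is supplied, and the Prym map is notoriously not proper. Your "alternative" is also insufficient as stated: generic injectivity plus everywhere-injective differential does not imply injectivity (a degree-$d$ map can be an immersion that is generically $1{:}1$ onto its image only if $d=1$, but that is exactly what must be proved, not assumed), and the Gauss-map/Abel--Prym reconstruction is precisely what breaks on the special loci you flag, so deferring those loci to "a careful study" leaves the hard part untouched. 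The paper's actual argument is a genuinely global reconstruction valid for \emph{all} coverings with $r\ge 6$: by Mumford's description the base locus of $|\Xi|$ equals $B_0=\{\pi^*(A)(p_1+\cdots+p_{\frac r2-2})\}$ (this identification is where $r\ge 6$ enters again); the quotient of $B_0$ by the kernel $\pi^*(JC[2])$ of the polarization map $\lambda_\Xi$ --- an intrinsic datum of $(P,\Xi)$ --- is birational to $W^0_{\frac r2-2}(D)$, which determines $D$ by Martens' extended Torelli theorem; and the residual involution $L\mapsto\omega_D\otimes L^{-1}$ descends to recover the covering involution, hence $C$ and $\pi$. Nothing in your proposal substitutes for this reconstruction, so as it stands the injectivity statement remains unproved.
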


\vskip 3mm
\begin{thm}\label{main_hyp} Let $\mathcal P^h_{g,r}$ be the restriction of $\mathcal P_{g,r}$ to the locus of coverings of hyperelliptic curves of genus $g$ ramified in $r$ points. Then 
\begin{enumerate}
\item [a)] The generic fiber of the map $\mathcal P^h_{g,2}$  is birational to a projective plane. 
\item [b)] The generic fiber of the map $\mathcal P^h_{g,4}$ is birational to an elliptic curve. 
\end{enumerate}
Thus all the ramified Prym maps with either $r=2$ or $r=4$ have positive dimensional fibers.
\end{thm}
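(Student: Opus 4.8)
The plan is to use the hyperelliptic structure of $C$ to make $\cR^h_{g,r}$ and the restricted Prym map completely explicit, to identify $P(\pi)$ with a concrete abelian variety built from a curve of small genus, and then to invert this identification and read off the generic fiber; since $\cP^h_{g,r}$ is algebraic it is enough to describe the fiber over one general point of the image. First I would fix coordinates: a point of $\cR^h_{g,r}$ is a triple $(C,\eta,R)$, where $C\to\PP^1$ is the hyperelliptic double cover — equivalently a reduced divisor $W$ of degree $2g+2$ in $\PP^1$ — while $R$ is a reduced divisor of degree $r$ on $C$ and $\eta\in\Pic^{r/2}(C)$ satisfies $\eta^{\otimes 2}\cong\cO_C(R)$, a choice varying in a torsor under $JC[2]$; thus $\dim\cR^h_{g,r}=2g-1+r$ for $g\ge 2$ (the case $g=1$, with $C$ elliptic, being similar). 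Composing $\pi$ with $C\to\PP^1$ gives a degree-four morphism $\varphi\colon D\to\PP^1$ that factors through $C\to\PP^1$; I would record its ramification — two ramification points of multiplicity two over each of the $2g+2$ Weierstrass points of $C$, a single one over each of the $r$ images $t_1,\dots,t_r$ of $R$ — and deduce that the Galois closure $\widehat D\to\PP^1$ has Galois group inside the dihedral group $D_4\subset S_4$ preserving the fibre-partition induced by $\pi$.

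Next I would decompose $J\widehat D$ under the action of $D_4$: the pullback of $JC$ is the isotypic component of one nontrivial character, and $P(\pi)$ is isogenous to the isotypic component of the two-dimensional irreducible representation — equivalently, to the Prym of the \'etale double cover $\widehat D\to\overline D$, where $\overline D\to\PP^1$ is the intermediate $(\mathbb{Z}/2)^2$-cover. Using Mumford's description of the Prym varieties of hyperelliptic and $(\mathbb{Z}/2)^2$-type covers, one should be able to identify $P(\pi)$ with its $\delta$-polarisation: for $r=2$ it is the Jacobian of a hyperelliptic curve $\Gamma$ of genus $g$, carrying twice its principal polarisation, whose Weierstrass points are a combinatorial function of $W$ and $\{t_1,t_2\}$; for $r=4$ its underlying abelian variety is a ramified Prym attached to a double cover of an elliptic curve $E$ branched at $2g+2$ points, with $E$ again determined by $W$ and $\{t_1,\dots,t_4\}$. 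An alternative route to the same identifications is a bigonal/tetragonal construction in the spirit of Donagi, adapted to the ramified setting, which for $r=4$ should reduce the computation to the genus-one case.

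Then I would run the construction backwards: from a general point of the image, equipped with the distinguished $\PP^1$ (resp. elliptic curve $E$) produced above, I would reconstruct all $(C,\eta,R)$ mapping to it. For $r=2$ the data not determined by the Prym is exactly an unordered pair of points on that $\PP^1$, so the fiber is birational to $\Sym^2\PP^1\cong\PP^2$. For $r=4$ the analogous bookkeeping leaves a one-parameter family, whose parameter space I would exhibit as a smooth genus-one curve — concretely a double cover of $\PP^1$ branched at four distinguished points, or $E$ itself, the genus-one structure coming from the residual freedom in the square root $\eta$ — so the fiber is birational to an elliptic curve. One still has to check the converse, that $\cP^h_{g,r}$ is genuinely constant along these families and not merely yielding abstractly isomorphic Pryms, which follows from the naturality of the identifications above; the final assertion of the theorem is then immediate, since every fiber of $\cP^h_{g,r}$ sits inside a fiber of $\cP_{g,r}$, which is therefore positive-dimensional as well.

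The hardest part should be making the identification of $P(\pi)$ with $J\Gamma$ (resp. with the elliptic Prym) precise \emph{as polarised abelian varieties} rather than merely up to isogeny: the $D_4$-decomposition produces only an isogeny, and one must control the induced polarisation exactly to obtain the type $\delta$ — otherwise the birational models $\PP^2$ and ``elliptic curve'' of the generic fiber come only up to an isogeny ambiguity. A secondary difficulty is the combinatorics of the inversion on the loci where the images $t_i$ collide with one another or with the Weierstrass points of $C$: there $\varphi$ degenerates and the auxiliary curve acquires nodes, and one has to check that these loci are proper closed subsets, so that the description really is that of the \emph{generic} fiber and the reconstruction is dominant onto the asserted parameter space.
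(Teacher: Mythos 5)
Your outline shares the paper's overall strategy---reduce to a tower $D\to C\to\PP^1$ over the hyperelliptic pencil and invert---but your primary route, the Galois closure with group in $D_4$ and the isotypic decomposition of its Jacobian, stalls exactly at the point you yourself flag as the hardest: it identifies $P(\pi)$ only up to isogeny, and since the theorem concerns fibers of a map into the moduli space of $\delta$-polarized abelian varieties, an isogeny statement gives no control on those fibers. The paper takes what you call the ``alternative route'' as the main one: the bigonal construction applied to $D\to C\to\PP^1$ produces a second tower $D'\to C'\to\PP^1$ with $g(C')=\frac r2-1$, and the theorem of Pantazis \cite{pa} asserts that $P(\pi')$ is isomorphic, \emph{as a polarized abelian variety}, to the dual of $P(\pi)$; combined with the Birkenhake--Lange isomorphism $\cA^{\delta}\cong\cA^{\delta'}$ given by dualizing polarizations, this yields an exact commutative diagram of moduli spaces with no isogeny ambiguity. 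You would need to import (or reprove) Pantazis's duality; the $D_4$ decomposition alone cannot produce it, so as written the argument does not close.

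There is a second genuine gap: even granting the exact identification, to conclude that the generic fiber of $\mathcal P^h_{g,4}$ \emph{is} an elliptic curve, rather than merely contains one, you must know that the Prym variety determines the covering $D'\to C'$ with $g(C')=1$ and $2g+2$ branch points; this is the injectivity of $\mathcal P_{1,2g+2}$, i.e.\ Ikeda's theorem \cite{ik} (or Theorem \ref{main_embedding} of the paper, since $2g+2\ge 6$), a nontrivial input your proposal never invokes. For $r=2$ the corresponding input is the classical Torelli theorem for the hyperelliptic curve $D'$. Finally, your parametrization of the $r=4$ fiber is misattributed: the residual freedom in the square root $\eta$ is a torsor under $JC[2]$ and hence finite; the elliptic curve in the fiber is $\Pic^2(C')\cong C'$, the choice of the degree-two map $C'\to\PP^1$ needed to run the bigonal construction backwards. (For $r=2$ the analogous choice is a pencil in $|\cO_{\PP^1}(2)|$, giving the $\PP^2$; this is consistent with your $\Sym^2\PP^1$.)
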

Observe that the maps $\mathcal P_{1,2}$ and $\mathcal P_{1,4}$ have positive dimensional fibers by dimensional reasons, hence we will assume during the proof that $g\ge 2$. 

The paper is organized as follows:
In section $2$ we give a review of some results and we give a detailed analysis of the injectivity of the differential of the Prym map $\mathcal P_{g,r}$. 
We obtain that $dP_{g,r}(\pi )$ is injective in all the elements $\pi: D\lra C$ for $r\ge 6$.
The proof of the first theorem is completed in section $3$. The  main idea is that  the covering $\pi$ can be recovered  from the base locus of 
the linear system $|\Xi | $ of the theta divisor of the Prym variety $P(\pi)$. 
 In section $4$  we investigate the restriction of the Prym map to the 
 locus of  coverings of hyperelliptic curves. By means of the bigonal construction (see \cite{do_fibres} and \cite{pa})  we prove Theorem (\ref{main_hyp}) showing in particular the existence of  positive dimensional fibers for $r=2$ and $r=4$ and all genus $g>0$.

\section{Preliminaries}

\subsection{The differential of the Prym map and positive dimensional fibers}

By the theory of double coverings, the moduli space $\mathcal R_{g,r}$ can be alternatively described as 
the following moduli space of triples $(C, \eta, B)$:
\[
%\begin{aligned}
\cR_{g,r}=\{ (C, \eta,  B)  \mid  [C] \in \mathcal M_g,   \eta \in \Pic^{\frac{r}{2}}(C),  B \text{ reduced divisor in } |\eta^{\otimes 2}|  \}/\cong.
\]
The codifferential of $\cP_{g,r}$ at a point  $[(C, \eta, B)]\in \mathcal R_{g,r}$  is given by the multiplication map (\cite{lo11})
$$
d\cP_{g,r}^* (C, \eta, B): \Sym^2H^0(C, \omega_C \otimes \eta) \lra H^0(C, \omega_C^2 \otimes \cO(B)).
$$
Assume that this map is not surjective. Let us recall  Theorem $1$ in \cite{gl}: given $L$ very ample on a curve $C$ of genus at least $2$,  if 
$$
\deg(L)\ge 2g+1-2 h^1(C,L)- \text {Cliff}(C),
$$
then $\Sym^m H^0(C,L)\lra H^0(C, L^{\otimes m})$ is surjective for all $m$. In our case $L=\omega_C \otimes \eta$ has degree $2g-2+\frac r2$. 
We get that, either $L$ is not very ample, or 
\[
 \deg(L)<2g+1-2h^1(L)-\text{Cliff}(C).
\]
Since $h^1(L)=h^0(\eta^{-1})=0$  this simply says that
\[
 \text{Cliff}(C)<3-r/2.
\]
Hence, since $r>0$,  the only possibilities are:
\begin{enumerate}
\item [a)] $L=\omega_C\otimes \eta $ is not very ample. In particular,  $\deg L = 2g-2+\frac r2 \leq 2g$, hence $r\le 4$.
\item [b)] $r=2$ and Cliff$(C)= 1$.
\item [c)] $r=2, 4$  and Cliff$(C)=0$.
\end{enumerate}

Therefore we have
 
\begin{prop} \label{differential}
If $r\ge 6$ then the differential of $\mathcal P_{g,r}$ is injective in all the elements of $\mathcal R_{g,r}$.
\end{prop}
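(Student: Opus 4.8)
The plan is to dualise: since $d\cP_{g,r}(\pi)$ is injective precisely when the codifferential $d\cP_{g,r}^*(C,\eta,B)$ is surjective, it suffices to show that the multiplication map
\[
\mu\colon\Sym^2 H^0(C,\omega_C\otimes\eta)\lra H^0(C,\omega_C^2\otimes\cO(B))
\]
is surjective for every $(C,\eta,B)\in\cR_{g,r}$ when $r\ge 6$. First I would record that, since $B\in|\eta^{\otimes 2}|$, one has $\omega_C^2\otimes\cO(B)\cong(\omega_C\otimes\eta)^{\otimes 2}$; writing $L:=\omega_C\otimes\eta$, this turns $\mu$ into the ordinary multiplication map $\Sym^2 H^0(C,L)\to H^0(C,L^{\otimes 2})$, so the Green--Lazarsfeld criterion recalled above applies directly.

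For $g\ge 2$ the statement is then immediate. When $r\ge 6$ we have $\deg L=2g-2+\tfrac r2\ge 2g+1$, so $L$ is very ample; and since $h^1(L)=h^0(\eta^{-1})=0$ (as $\deg\eta=r/2>0$) the degree inequality in the Green--Lazarsfeld bound becomes $\text{Cliff}(C)\ge 3-\tfrac r2$, which holds because $3-\tfrac r2\le 0\le\text{Cliff}(C)$ for $g\ge 2$. Hence $\mu$ is surjective and $d\cP_{g,r}(\pi)$ is injective. Equivalently, in the trichotomy (a)--(c) established above, (a) is excluded by very ampleness and (b), (c) require $r\le 4$, so none of them can occur.

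It remains to treat the elliptic case $g=1$, where the quoted form of Green--Lazarsfeld does not apply. Here $\omega_C\cong\cO_C$, so $L=\eta$ has degree $r/2\ge 3$; a line bundle of degree at least $3$ on an elliptic curve is very ample and embeds $C$ as a projectively normal curve, whence $\Sym^2 H^0(C,L)\to H^0(C,L^{\otimes 2})$ is surjective and the differential is again injective. (Alternatively one may invoke \cite{ik} for $g=1$.)

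The proof is essentially assembled from the preliminaries, so I do not expect a genuine difficulty; the only points requiring a little care are the sheaf identification $\omega_C^2\otimes\cO(B)\cong L^{\otimes 2}$ that lets one apply the $\Sym^m$-surjectivity statement verbatim, the uniform very ampleness bound $\deg L\ge 2g+1$ for $r\ge 6$ which disposes of case (a) in one stroke, and the separate (elementary) argument for $g=1$.
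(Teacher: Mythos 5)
Your proof is correct and follows essentially the same route as the paper: dualising to the multiplication map for $L=\omega_C\otimes\eta$, identifying $\omega_C^2\otimes\cO(B)\cong L^{\otimes 2}$, and applying the Green--Lazarsfeld criterion, with very ampleness coming from $\deg L\ge 2g+1$ when $r\ge 6$. You are in fact slightly more careful than the paper, whose quoted form of Green--Lazarsfeld assumes genus at least $2$ and which does not spell out the $g=1$ case that you settle separately via projective normality of line bundles of degree $\ge 3$ on an elliptic curve.
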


\begin{rem}
It is natural to ask whether (as in \cite{n2}) it is possible to characterize all positive dimensional fibers  of $\mathcal P_{g,r}$ by using this differential. 
More precisely, assume that there is a covering $[\pi:D \ra C]\in \mathcal R_{g,r}$ belonging to a positive dimensional fiber. Then we have seen that one
of the cases a), b) or c) above occurs. Case a) is equivalent to, either $r=2$ and $\eta = \mathcal O_C(x+y-z)$ for some points $x, y, z \in C$, 
or $r=4$ and $h^0(C,\eta )>0$.
The cases b) and  c) correspond to the coverings of hyperelliptic curves (with $r=2, 4$), or  trigonal curves (with $r=2$) or  quintic plane curves (with $r=2$). 
We prove that there are positive dimensional fibers  in the hyperelliptic case, with $r=2, 4$ (Theorem \ref{main_hyp})  and 
in the case of coverings over trigonal curves of genus $5$ (see Proposition \ref{trigonal_5} below).  
\end{rem}

\subsection{Other examples of non-injectivity}

As mentioned in the Introduction,  we find instances of positive dimensional fibers of the ramified Prym map by studying ramified coverings 
of hyperelliptic curves and they appear only for the values  $r=2$ and $r= 4$. 
The non-injectivity of $\mathcal P_{g,4}$ for $g\ge 3$ has been proved in \cite{nr}, where Nagaraj and Ramanan extend the tetragonal construction (see \cite[Proposition 9.9]{nr}) to double coverings over tetragonal curves, whose branch divisor is in the tetragonal linear series. They show that this construction,
like in the unramified case, provides two other ramified coverings (over tetragonal curves) having the same Prym variety. They prove more:

\begin{thm}(Nagaraj-Ramanan, \cite[Corollary 9.12]{nr}) For $g\ge 3$ the Prym map $\mathcal P_{g,4}$, restricted to the locus of the coverings of tetragonal curves has degree $3$.
\end{thm}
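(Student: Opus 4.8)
The plan is to deduce the statement from the \emph{ramified tetragonal construction} of Nagaraj--Ramanan (\cite[Proposition 9.9]{nr}), following the template of Donagi's proof of the analogous fact for unramified Pryms (\cite{do}). Fix a generic triple $[(C,\eta,B)]$ in the tetragonal locus: $C$ carries a base-point-free $g^1_4$, say $f\colon C\to\PP^1$, unique for $g$ in the relevant range; the line bundle $\eta^{\otimes 2}$ is the tetragonal bundle $f^*\cO_{\PP^1}(1)$; and $B=f^{-1}(p_0)$ for generic $p_0\in\PP^1$. The construction of \cite[Proposition 9.9]{nr} attaches to these data two further triples $[(C',\eta',B')]$ and $[(C'',\eta'',B'')]$ of the same shape, together with isomorphisms $P(\pi)\cong P(\pi')\cong P(\pi'')$ of polarized abelian varieties.

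The first step is to show that these three triples form a single orbit of size exactly $3$ and lie in one fiber. This is the combinatorial core of the tetragonal construction: from the degree-$8$ cover $\widetilde D\to\PP^1$ obtained by composition one reads off three compatible splittings $8=4+4$, which give the three tetragonal structures; applying the construction to $(C',\eta',B')$ returns the unordered pair $\{(C,\eta,B),(C'',\eta'',B'')\}$, so the three triples are permuted transitively and nothing is produced outside the orbit. Since they have isomorphic polarized Pryms, the fiber of $\cP_{g,4}$ restricted to the tetragonal locus through $[(C,\eta,B)]$ contains these $3$ points, pairwise distinct for a generic triple. By the case analysis preceding Proposition \ref{differential}, the differential of $\cP_{g,4}$ fails to be injective at a triple only if $h^0(C,\eta)>0$ or $C$ is hyperelliptic; a generic tetragonal curve of genus $g\ge 3$ satisfies neither (a finite set of square roots of $f^*\cO(1)$ generically avoids the surface of effective degree-$2$ classes), so $\cP_{g,4}$ is immersive at the generic tetragonal triple, the generic fiber is finite, and hence $\deg\cP_{g,4}|_{\mathrm{tet}}\ge 3$.

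The substantive step is the reverse inequality: the generic fiber has no further point. Here one reconstructs the tetragonal data, up to the $S_3$-symmetry above, from the polarized Prym $(P(\pi),\Xi)$ together with its distinguished Abel--Prym curve, the image of $D$; one analyzes the geometry of $\Xi$ and of this curve --- the singularities of the theta divisor, equivalently the relevant Prym--Brill--Noether loci, as in the work of Donagi and Welters --- to prove that the $g^1_4$-structures on a curve compatible with a fixed $(P,\Xi)$ of this type are exactly the three produced by the construction. Equivalently, one shows that the degree-$3$ self-correspondence on the tetragonal locus defined by the tetragonal construction coincides with the fiberwise equivalence relation of $\cP_{g,4}|_{\mathrm{tet}}$, so that $\cP_{g,4}$ factors as the quotient by this correspondence followed by a generically injective map (a Torelli statement for the quotient). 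I expect this last point to be the main obstacle: ruling out a hidden extra member of the fiber beyond the $S_3$-orbit is where the structure theory of Prym theta divisors is genuinely needed --- not merely the formal properties of the tetragonal construction --- and where the hypotheses $g\ge 3$ and $B$ lying in the tetragonal series are used.

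If a direct reconstruction proves awkward, a possible shortcut is to verify the count in one explicit, carefully chosen example --- for instance a degeneration to small genus or to a partially reducible or hyperelliptic configuration, where $(P,\Xi)$ and all its tetragonal structures can be written down by hand --- and then propagate the equality $\deg\cP_{g,4}|_{\mathrm{tet}}=3$ to the generic tetragonal triple using the irreducibility of the tetragonal locus, upper semicontinuity of fiber dimension, and the lower bound of $3$ already in hand.
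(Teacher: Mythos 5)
The paper does not prove this statement: it is quoted verbatim as an external result, attributed to Nagaraj--Ramanan \cite[Corollary 9.12]{nr}, and the authors' only gloss is the one-sentence remark that the ramified tetragonal construction of \cite[Proposition 9.9]{nr} supplies the two additional coverings with isomorphic Prym. So there is no in-paper argument to compare your proposal against; the actual proof lives in \cite{nr}.

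Judged on its own terms, your sketch correctly captures the mechanism of the lower bound: the tetragonal construction yields an $S_3$-orbit of three generically distinct triples with isomorphic polarized Pryms, and generic immersivity of $\cP_{g,4}$ on the tetragonal locus (which does follow from the case analysis preceding Proposition \ref{differential}, since immersivity of the full map at a point forces finiteness of the restricted fiber there, and a generic tetragonal triple is neither hyperelliptic nor has $h^0(\eta)>0$) gives generic finiteness, hence degree at least $3$. But the upper bound --- that the generic fiber contains nothing outside the orbit --- is exactly the substantive content of Corollary 9.12, and your proposal leaves it as a program (``one reconstructs\ldots'', ``I expect this to be the main obstacle'') rather than an argument; the degeneration fallback is likewise unexecuted and would itself require identifying all tetragonal structures on the special fiber. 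As a self-contained proof this is therefore incomplete at precisely the step that matters; as a reading of how the cited result works, it is a reasonable reconstruction, though you should be aware that Nagaraj--Ramanan's own route goes through their structure theory of $(1,2,\dots,2)$-polarized abelian varieties rather than through singularities of the Prym theta divisor in the style of Donagi--Welters.
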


We want to point out the existence of other positive dimensional fibers for $g=5, r=2$ which do not arise  from hyperelliptic curves.
In this case there is a nice relation with the geometry of cubic threefolds. Indeed, in \cite{no} we show how $\cP_{5,2}$ can be identified with the restriction 
of the (compactified) unramified Prym map $\overline{\cP}_6: \overline{\cR}_{6}\lra \cA_5$ to the divisor $\Delta^n$ of the closure of the set of admissible 
coverings of nodal curves. Then:
\[
 \cP_{5,2}:\cR_{5,2}\hookrightarrow \Delta ^n \subset \overline{R}_6 \stackrel{\overline{\cP}_6}{\lra} \cA^{5}.
\]
The image of $\cR_{5,2}$ in $\Delta^n$ is the open set consisting of the double coverings $D_0\lra C_0$ such that both curves are irreducible with only one node; 
the node in $D_0$ maps to the node in $C_0$, away of the nodes the covering is unramified, and $p_a(C_0)=6$. The locus $\mathcal C$ of the intermediate 
Jacobians of smooth cubic threefolds is contained in $\mathcal P_{5,2}(\mathcal R_{5,2})$. Let $V$ be a generic smooth cubic threefold with Fano variety of 
lines $F(V)$. 
Then the fiber of the unramified map $\overline {\mathcal P}_6^{-1}(V)$ is $F(V)$ and $\mathcal P_{5,2}^{-1}(V)= F(V)\cap \mathcal R_{5,2}$ is 
 an irreducible curve for a general $V$ (see \cite{no}, section 2 for the details). In particular, the fiber is positive dimensional. Since the trigonal curves of genus $5$ are normalizations 
of nodal quintic plane curves, we have proved:

\begin{prop}\label{trigonal_5}
 The fibers of the restriction of $\mathcal P_{5,2}$ to $\mathcal R^{tri}_{g,2}$ are all positive dimensional. 
\end{prop}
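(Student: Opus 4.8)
The plan is to use the description, established in \cite{no}, of $\cP_{5,2}$ as (an open piece of) the restriction of the unramified Prym map $\overline{\cP}_6:\overline{\cR}_6\lra\cA_5$ to the boundary divisor $\Delta^n$, together with the classical conic bundle construction attached to a line on a cubic threefold. Under that identification
\[
\cP_{5,2}:\cR_{5,2}\hookrightarrow\Delta^n\subset\overline{\cR}_6\stackrel{\overline{\cP}_6}{\lra}\cA_5,
\]
a triple $(C,\eta,B)$ with $g(C)=5$ corresponds to an admissible double covering $D_0\lra C_0$ of an irreducible $1$-nodal curve $C_0$ with $p_a(C_0)=6$, the node of $D_0$ lying over the node of $C_0$, \'etale elsewhere, and $C$ the normalization of $C_0$. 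I would first record that the normalizations of $1$-nodal plane quintics are exactly the trigonal curves of genus $5$: projection from the node produces a $g^1_3$, and conversely $|\omega_C\otimes t^{-1}|$, with $t$ the trigonal bundle, maps a genus $5$ trigonal curve onto a nodal plane quintic. Hence $\cR^{tri}_{5,2}$ is precisely the part of $\cR_{5,2}$ lying over admissible coverings whose base is a $1$-nodal plane quintic.

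The geometric input is the following. For a general smooth cubic threefold $V$ and a line $\ell\subset V$, projection away from $\ell$ presents $\mathrm{Bl}_\ell V$ as a conic bundle over $\PP^2$ whose discriminant is a plane quintic $C_\ell$, carrying the \'etale double cover $\widetilde C_\ell\lra C_\ell$ defined by the two rulings of the degenerate conics; by the classical work of Clemens--Griffiths, Mumford and Beauville one has $\overline{\cP}_6(\widetilde C_\ell\lra C_\ell)=JV$. As $\ell$ runs over the distinguished curve of lines of the second type inside the Fano surface $F(V)$, the quintic $C_\ell$ acquires a node, so $(\widetilde C_\ell\lra C_\ell)$ becomes an admissible covering of a $1$-nodal plane quintic, i.e.\ a point of $\cR^{tri}_{5,2}$, still with Prym $JV$. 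Thus for general $V$
\[
\bigl(\cP_{5,2}|_{\cR^{tri}_{5,2}}\bigr)^{-1}(JV)=F(V)\cap\cR_{5,2}
\]
is an irreducible curve (see \cite{no}, section 2). Since the intermediate Jacobians of smooth cubic threefolds form a $10$-dimensional subvariety of $\cP_{5,2}(\cR^{tri}_{5,2})$ whereas $\dim\cR^{tri}_{5,2}=13$, and the fibre over each such $JV$ is positive-dimensional, the map $\cP_{5,2}|_{\cR^{tri}_{5,2}}$ is not generically finite; combining this with the computation of the codifferential carried out above --- which, for $C$ trigonal of genus $5$ (the case $\mathrm{Cliff}(C)=1$ with $r=2$), shows that $d\cP_{5,2}^*(C,\eta,B)$ fails to be surjective at every point of $\cR^{tri}_{5,2}$, whence $\dim\cP_{5,2}(\cR^{tri}_{5,2})\le\dim\cR^{tri}_{5,2}-1$ --- upper semicontinuity of the fibre dimension forces \emph{every} fibre to be positive-dimensional.

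I would quote the cubic-threefold part of the argument rather than reprove it: it is classical and is treated in detail in \cite{no}. Two points genuinely require care. The first is checking that the admissible covering attached to a line of the second type really falls into the component $\Delta^n$ used to describe $\cR_{5,2}$ --- that is, that the \'etale double cover of the $1$-nodal quintic $C_\ell$ has connected pre-image over the node, so that $D_0$ is again irreducible with a single node, rather than landing in a Wirtinger-type component. The second, which I expect to be the main obstacle, is passing from ``there is a positive-dimensional fibre over each $JV$'' to ``every fibre of $\cP_{5,2}|_{\cR^{tri}_{5,2}}$ is positive-dimensional'': knowing the fibre dimension only over the $10$-dimensional cubic-threefold locus is not enough, and one genuinely needs the non-surjectivity of the codifferential along all of $\cR^{tri}_{5,2}$ (together with the check that the corresponding kernel direction is tangent to $\cR^{tri}_{5,2}$) in order to bound $\dim\cP_{5,2}(\cR^{tri}_{5,2})$ from above, after which semicontinuity of the fibre dimension finishes the proof.
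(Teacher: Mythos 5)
Your first two paragraphs reproduce the paper's proof: the authors identify $\cP_{5,2}$ with the restriction of $\overline{\cP}_6$ to $\Delta^n$, quote from \cite{no} (Section 2) that $\cP_{5,2}^{-1}(JV)=F(V)\cap\cR_{5,2}$ is an irreducible curve for a general cubic threefold $V$, and observe that these fibers consist of coverings of $1$-nodal plane quintics, whose normalizations are exactly the trigonal curves of genus $5$ --- and that is the entire argument given. One small slip in your recollection of the classical picture: the discriminant quintic $C_\ell$ acquires a node not when $\ell$ itself is of the second type, but when some plane through $\ell$ cuts $V$ in $\ell$ plus a \emph{double} line, i.e.\ when $\ell$ is residual to a line of the second type; either way this is a curve in $F(V)$, which is all that is used.

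The material you add in order to reach the literal statement (\emph{all} fibers positive dimensional) is where the genuine gap lies, and it is the implication direction in the codifferential step. The computation in Section~2 shows that \emph{if} $d\cP_{g,r}^*$ fails to be surjective \emph{then} one of the cases a)--c) holds; it does not show that $r=2$ and $\mathrm{Cliff}(C)=1$ \emph{force} non-surjectivity, so you cannot read off from it that $\Sym^2H^0(\omega_C\otimes\eta)\to H^0(\omega_C^2\otimes\cO(B))$ fails to be surjective along $\cR^{tri}_{5,2}$. Concretely, the source has dimension $15$ and the target $14$, so non-surjectivity amounts to exhibiting \emph{two} independent quadrics through the degree-$9$ image of $C$ in $\PP^4$; the quadric swept out by the planes of the $g^1_3$ gives one, and a further argument is needed for a second. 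On top of that you would still have to check that the resulting kernel direction is tangent to $\cR^{tri}_{5,2}$ before semicontinuity applies --- you flag both points but resolve neither, and your own dimension count ($10+1=11$ versus $\dim\cR^{tri}_{5,2}=13$) shows the conic-bundle fibers alone cannot feed the semicontinuity argument. For comparison, the paper does not attempt any of this: its proof establishes positive-dimensionality only for the fibers $F(V)\cap\cR_{5,2}$ over general intermediate Jacobians, which is what the Remark in Section~2 actually invokes.
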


\subsection{Dual polarizations}\label{iso_dual}

In this subsection we borrow the main result on dual polarizations from \cite{bl_dual} adapted to our situation.
Let $(A,L)$ be a polarized abelian variety of dimension $g-1+\frac r2$ and polarization type $\delta=(1,\ldots,1,2, \ldots , 2)$, where $2$ is repeated $g$ times. Then there is  a natural polarization $\widehat L$ in the dual abelian variety $\widehat A$ characterized by the property that the polarization maps:
\[
 \lambda_L:A \lra \widehat A, \qquad \qquad \lambda_{\widehat L}: \widehat A \lra \widehat{\widehat{A\, }} \cong A
\]
satisfy $\lambda_{\widehat L} \circ \lambda_L=2_{A}$ and $\lambda_{L} \circ \lambda_{\widehat L}=2_{\widehat A}$. The type of $\widehat L$ is $\delta'=(1,\ldots,1,2, \ldots , 2)$ where now  $1$ is repeated $g$ times (and therefore $2$ is repeated $\frac r2-1$ times). This construction can be done in families giving the following result:

\begin{thm}(\cite[Theorem 1.1]{bl_dual}) 
 There is a canonical isomorphism of coarse moduli spaces
 \[
  \mathcal A_{g-1+\frac r2}^{\delta} \stackrel{\cong }{\longrightarrow} \mathcal A_{g-1+\frac r2}^{\delta'}
 \]
 sending a polarized abelian variety of type $\delta$ to its polarized dual abelian variety.
\end{thm}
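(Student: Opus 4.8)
The final statement to prove is Theorem~\ref{bl_dual}-type duality result, namely the canonical isomorphism $\mathcal A_{g-1+\frac r2}^{\delta}\cong \mathcal A_{g-1+\frac r2}^{\delta'}$; since this is cited verbatim from \cite{bl_dual}, the task is really to \emph{re-derive} it in the form convenient for us, so the plan is to first establish the statement pointwise on objects and then promote it to a morphism of coarse moduli spaces by a family argument. First I would recall the construction of the dual polarization: given $(A,L)$ of type $\delta=(1,\dots,1,2,\dots,2)$ with $2$ repeated $g$ times, the isogeny $\lambda_L:A\to\widehat A$ has kernel $K(L)\cong(\mathbb Z/2\mathbb Z)^{2g}$, and one defines $\widehat L$ on $\widehat A$ as the unique polarization (up to algebraic equivalence) with $\lambda_{\widehat L}\circ\lambda_L=2_A$; existence and uniqueness of $\widehat L$ follow from the fact that $\lambda_L$ is an isogeny of exponent $2$, so $2_A$ factors through it. A Smith-normal-form computation on the integral symplectic form attached to $L$ then shows that the type of $\widehat L$ is obtained by replacing each elementary divisor $d_i$ by $e/d_i$ where $e=2$ is the exponent; this turns $(1^{r/2-1},2^{g})$ into $(1^{g},2^{r/2-1})$, i.e.\ the type $\delta'$ claimed.

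Next I would check the involutivity: applying the construction twice returns $(A,L)$ up to isomorphism, because $\lambda_{\widehat L}\circ\lambda_L=2_A$ and $\lambda_L\circ\lambda_{\widehat L}=2_{\widehat A}$ together with the canonical identification $\widehat{\widehat A}\cong A$ force $\widehat{\widehat L}\equiv L$; one must be slightly careful that the two possible normalizations (the polarization versus its algebraic-equivalence class) match, which is where invoking $2_A$ rather than any other multiple is essential. This gives a bijection on the level of isomorphism classes of polarized abelian varieties, hence on the $\mathbb C$-points of the two coarse moduli spaces.

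To upgrade this to an isomorphism of coarse moduli \emph{spaces} (not merely a bijection of point-sets) I would carry the construction out in families: over any base $S$ with a relatively polarized abelian scheme $(\mathcal A/S,\mathcal L)$ of type $\delta$, the dual abelian scheme $\widehat{\mathcal A}/S$ carries a canonical relative polarization $\widehat{\mathcal L}$ defined fibrewise as above, and its formation commutes with base change because $\lambda_{\mathcal L}$ and the factorization of $2_{\mathcal A}$ through it are stable under base change. This produces a morphism of moduli functors, hence of the associated coarse spaces; running it in both directions and using the involutivity from the previous step shows the two morphisms are mutually inverse, proving the isomorphism.

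The main obstacle I anticipate is not any single deep step but rather the bookkeeping around \emph{normalizations}: the dual polarization is only canonically defined up to the ambiguity inherent in passing between a line bundle and its algebraic-equivalence class, and one has to pin down a consistent choice so that (i) the type comes out exactly $\delta'$ and (ii) the double-dual is canonically the identity rather than merely isomorphic. The cleanest route is to phrase everything in terms of the polarization morphisms $\lambda_{\mathcal L}$ (which are genuinely canonical) and the relation $\lambda_{\widehat{\mathcal L}}\circ\lambda_{\mathcal L}=2_{\mathcal A}$, deducing the statement about line bundles only at the end; this is exactly the strategy of \cite{bl_dual}, and for our purposes we may simply cite their Theorem~1.1 once the pointwise picture above has been recorded.
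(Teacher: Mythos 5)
Your proposal is correct and takes essentially the same route as the paper, which records exactly this pointwise construction in Subsection 2.3 (the characterization $\lambda_{\widehat L}\circ\lambda_L=2_A$, $\lambda_L\circ\lambda_{\widehat L}=2_{\widehat A}$, and the resulting type $\delta'$) and then simply cites \cite[Theorem 1.1]{bl_dual} for the statement in families. The only step worth making explicit is that the homomorphism $\psi$ with $\psi\circ\lambda_L=2_A$ obtained by factoring through the exponent-$2$ kernel must be verified to be of the form $\lambda_{\widehat L}$ for an actual polarization (i.e.\ symmetric and positive), which is standard since $\psi=2\,\lambda_L^{-1}$ in $\mathrm{Hom}(\widehat A,A)\otimes\mathbb{Q}$ inherits both properties from $\lambda_L$.
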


\vskip 5mm
\section{Global Torelli: Injectivity of $\mathcal P_{g,r}$ for $r\ge 6$ and $g\ge 1$}

The aim of this section is to prove that the ramified Prym map $\mathcal P_{g,r}$ for $r\ge 6$ and $g\ge 1$ is injective. This together with Proposition
\ref{differential}  prove our main Theorem \ref{main_embedding}.  
Given any Prym variety $(P, \Xi)$ in the image of $P_{g,r}$ we will recover the corresponding map  $\pi:D \lra C $ from the base
locus of $|\Xi|$.  The proof uses a particular description of the base locus of $\Xi$ given in  \cite{no}.  
For sake of completeness, we recall briefly some results from \cite{no} and we refer to this article for the details. 

Let $\pi: D\lra C$  be the double covering ramified in $r\geq 6 $ distinct points attached to the data $(C,\eta, B)$.
According to   \cite{mu} (Proposition p. 334) the base locus of $|\Xi|$ can be identified canonically in
 \[
 P^{can}=  \Nm_{\pi}^{-1} (\omega_C \otimes \eta)  \subset \Pic^{2g-2+\frac r2}(D) = \Pic^{g(D)-1}(D)
 \]
with the set
 \[
\widetilde{Bs}= \{ L \in P^{can} \ \mid \ \pi^*(JC) \subset \Theta^{can}_{D,-L} \}, 
\]
where $\Theta^{can}_D$ is the canonical presentation  $W^0_{g(D)-1}(D)$ of the theta divisor of $JD$ and
$\Theta^{can}_{D, -L}$ is the translation by the element $L \in \Pic^{g(D)-1}(D)$ (see \cite[Proposition 1.3]{no} for the details). 
Next we define 
\[
B_0:= \{ L =\pi^*(A)(p_1+\dots +p_{\frac r2-2})  \mid  A \in \Pic^g(C), \ p_i \in D, \  \Nm_{\pi} L \cong \omega_C \otimes \eta   \}.
\]
In \cite[Proposition 1.6]{no} the following is proved:
\begin{prop} \label{base-locus}   
The equality $B_0 = \widetilde{Bs}$ holds.
\end{prop}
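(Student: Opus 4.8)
The plan is to establish the two inclusions $B_0\subseteq\widetilde{Bs}$ and $\widetilde{Bs}\subseteq B_0$ separately; both sets are contained in $P^{can}=\Nm_\pi^{-1}(\omega_C\otimes\eta)$ by their very definitions. The inclusion $B_0\subseteq\widetilde{Bs}$ is the easy one: if $L=\pi^*A(p_1+\dots+p_{\frac r2-2})\in B_0$ and $N\in\Pic^0(C)$ is arbitrary, then $L\otimes\pi^*N=\pi^*(A\otimes N)(p_1+\dots+p_{\frac r2-2})$, hence by the projection formula $h^0(D,L\otimes\pi^*N)\ge h^0(D,\pi^*(A\otimes N))=h^0(C,A\otimes N)+h^0(C,A\otimes N\otimes\eta^{-1})\ge 1$, since $\deg(A\otimes N)=g$ and every line bundle of degree $g$ on a curve of genus $g$ is effective by Riemann--Roch. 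Thus $\pi^*N+L\in W^0_{g(D)-1}(D)=\Theta^{can}_D$ for every $N$, i.e. $\pi^*(JC)\subseteq\Theta^{can}_{D,-L}$, which is exactly the condition defining $\widetilde{Bs}$.

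For the reverse inclusion I would push everything down to $C$. Given $L\in\widetilde{Bs}$, set $E:=\pi_*L$; this is locally free of rank $2$ on $C$, with $\deg E=2g-2$ and, by the formula $\det\pi_*L=\Nm_\pi(L)\otimes\det\pi_*\mathcal O_D$, determinant $\det E=(\omega_C\otimes\eta)\otimes\eta^{-1}=\omega_C$, so its slope equals $g-1$. By the projection formula the defining condition of $\widetilde{Bs}$ reads $h^0(C,E\otimes N)=h^0(D,L\otimes\pi^*N)>0$ for all $N\in\Pic^0(C)$, i.e. $E$ has no theta divisor. The key step is to deduce from this that $E$ is \emph{not} semistable. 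If $E$ were strictly semistable, say with saturated sub-line bundle $M$ and quotient line bundle $M'$, both of degree $g-1$, then $h^0(E\otimes N)\le h^0(M\otimes N)+h^0(M'\otimes N)=0$ for a general $N$ (each term vanishes off a proper subvariety of $\Pic^0(C)$), a contradiction. If $E$ were stable, one uses that a semistable rank-two bundle of degree $2g-2$ always has a theta divisor --- either by invoking the base-point freeness of the rank-two theta map, or directly: a nonzero $s\in H^0(E\otimes N)$ generates a sub-line bundle of $E$ whose saturation has degree $\le g-2$, and sub-line bundles $M\hookrightarrow\pi_*L$ correspond by adjunction to inclusions $\pi^*M\hookrightarrow L$, hence to effective divisors $\Delta$ on $D$ with $\Delta\sim L-\pi^*M$; this forces the relevant $\sigma$-invariant classes (where $\sigma$ is the covering involution of $D$) to lie in one fixed translate of $\pi^*\Pic^0(C)$, of dimension $g$, and a dimension count on the family of pairs $(N,[s])$ then contradicts its surjectivity onto $\Pic^0(C)$. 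Hence $E=\pi_*L$ is unstable and admits a sub-line bundle $M\hookrightarrow\pi_*L$ with $\deg M\ge g$.

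It remains to translate back to $D$. The inclusion $M\hookrightarrow\pi_*L$ is adjoint to a nonzero, hence injective, morphism $\pi^*M\hookrightarrow L$, so $L\cong\pi^*M\otimes\mathcal O_D(\Delta)$ with $\Delta$ effective of degree $\deg L-2\deg M=(2g-2+\frac r2)-2\deg M$. Since $\deg M\ge g$, this forces $\deg M=g+k$ with $k\ge 0$ and $\deg\Delta=\frac r2-2-2k\ge 0$. Choosing any effective divisor $\Gamma$ of degree $k$ on $C$ and putting $A:=M(-\Gamma)\in\Pic^g(C)$, we get $\pi^*M=\pi^*A\otimes\mathcal O_D(\pi^*\Gamma)$ and therefore $L=\pi^*A\otimes\mathcal O_D(\Delta+\pi^*\Gamma)$, where $\Delta+\pi^*\Gamma$ is effective of degree $(\frac r2-2-2k)+2k=\frac r2-2$; writing $\Delta+\pi^*\Gamma=p_1+\dots+p_{\frac r2-2}$ and recalling $\Nm_\pi L=\omega_C\otimes\eta$ (as $L\in P^{can}$) yields $L\in B_0$. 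The main obstacle is the middle step --- excluding a \emph{stable} $E=\pi_*L$ with $h^0(E\otimes N)\neq 0$ for all $N$; the unstable and strictly semistable cases are routine, while the stable case needs either the known base-point freeness of the rank-two theta map or a careful dimension estimate controlling possible excess intersection of the Brill--Noether loci $W^0_\bullet(D)$ with translates of $\pi^*\Pic^0(C)$.
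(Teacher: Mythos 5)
Your proof follows essentially the same route as the paper's (which defers to \cite[Proposition 1.6]{no}): the easy inclusion $B_0\subseteq\widetilde{Bs}$ via the projection formula and Riemann--Roch, and the reverse inclusion by showing that $E=\pi_*L$ fails to be semistable and then converting a destabilizing sub-line bundle $M\hookrightarrow\pi_*L$ of degree $\ge g$ into a decomposition $L=\pi^*A(p_1+\dots+p_{\frac r2-2})$ by adjunction --- this adjunction step is exactly the role played by Mumford's exact sequence in the original argument. The numerics ($\deg E=2g-2$, $\det E=\omega_C$, $\deg\Delta=\frac r2-2-2k$), the treatment of the strictly semistable case, and the normalization $A=M(-\Gamma)$ are all correct.

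The one step that is asserted rather than proved is the exclusion of a \emph{stable} $E=\pi_*L$ with $h^0(E\otimes N)>0$ for all $N\in\Pic^0(C)$. Invoking the existence of theta divisors for semistable rank-two bundles of slope $g-1$ (Raynaud's theorem, equivalently base-point freeness of the rank-two theta map) is a legitimate way to close this and is what you should do. Your alternative ``direct'' argument, however, does not work as sketched: a nonzero section of $E\otimes N$ whose saturation has degree $d\le g-2$ exhibits $\pi^*N^{-1}$ as $L^{-1}\otimes\pi^*\mathcal O_C(\Gamma)\otimes\mathcal O_D(\Delta)$ with $(\Gamma,\Delta)\in C^{(d)}\times D^{(2g-2+\frac r2-2d)}$, and the source of this correspondence has dimension $2g-2+\frac r2-d\ge g+\frac r2>g=\dim\pi^*\Pic^0(C)$. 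So surjectivity onto $\pi^*\Pic^0(C)$ is not contradicted by counting parameters alone; one would have to bound the fiber dimension (i.e.\ the excess dimension of the relevant Quot and Brill--Noether loci), which is precisely the content of Raynaud's rank-two theorem. With that citation in place the argument is complete.
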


Note that this result works for any triple in $\cR_{g,r}$ with $r\geq 6$.
For $g\geq 2$, the idea of the proof is that  the elements $L \in P^{can}$ in the base locus, coincide with the line bundles on $D$ whose push-forward $\pi_*L$ are 
rank two vector bundles on $C$ which are not semistable.  The equality  is then deduced from the condition of  $\pi_*L$ being unstable and a 
short exact sequence in  \cite{mu}  (proof of the Proposition in p. 338)  involving $\pi_*L$. 
In the case $g=1$ the proposition is proven directly from this short exact sequence.

\begin{thm} 
  The map $\mathcal P_{g,r}$ is injective for all $r\ge 6$ and $g\ge 1$.
 \end{thm}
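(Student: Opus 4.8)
The plan is to recover the covering $\pi:D\lra C$ from the pair $(P,\Xi)$ using Proposition \ref{base-locus}, which identifies the base locus $\widetilde{Bs}$ of $|\Xi|$ (inside the canonically determined translate $P^{can}\subset\Pic^{g(D)-1}(D)$) with the explicit set
\[
 B_0=\{\pi^*(A)(p_1+\cdots+p_{\frac r2-2})\mid A\in\Pic^g(C),\ p_i\in D,\ \Nm_\pi\bigl(\pi^*(A)(p_1+\cdots+p_{\frac r2-2})\bigr)\cong\omega_C\otimes\eta\}.
\]
Since $r\ge 6$ we have $\frac r2-2\ge 1$, so the description of $B_0$ genuinely involves the points $p_i\in D$, and the whole curve $D$ (and with it the divisor $B$ and the line bundle $\eta$) should be readable off from the geometry of $B_0$. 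The intrinsic data we start from is only the polarized abelian variety $(P,\Xi)$; so the first point to nail down is that $P^{can}$, the canonical theta divisor $\Theta^{can}_D$ of $JD$, the embedding $\pi^*JC\hookrightarrow JD$ and the whole picture of \cite{no} are determined by $(P,\Xi)$ alone, up to the ambiguities one expects (translation, and the hyperelliptic-type involution). This is essentially the content of the set-up recalled before Proposition \ref{base-locus}, together with the fact that $\widetilde{Bs}$ is literally the scheme-theoretic base locus of the complete linear system $|\Xi|$, hence manifestly an invariant of $(P,\Xi)$.

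Granting that, the heart of the argument is to show: from $B_0\subset\Pic^{g(D)-1}(D)$ one reconstructs $\pi:D\lra C$. Here I would argue as follows. The abelian subvariety generated by differences of points of $B_0$ is $\pi^*JC$, so $JC$ (hence the abelian variety $JC$, and by the classical Torelli theorem the curve $C$, at least once $g\ge 3$; the low-genus cases $g=1,2$ are handled separately or absorbed into the $\mathcal A$ picture) together with the isogeny $\pi^*:JC\to JD$ is recovered. Fixing one element $L_0=\pi^*(A_0)\in B_0$ with no $p_i$-part (such exist, by suitable choice of $A_0$), translation by $-L_0$ carries $B_0$ into $\Pic^0(D)$, and the translated set is
\[
 \{\pi^*(A-A_0)(p_1+\cdots+p_{\frac r2-2})\}.
\]
Modding out by the subgroup $\pi^*JC$, the image of $B_0-L_0$ in $JD/\pi^*JC$ is exactly the image of the $(\frac r2-2)$-th symmetric power $D^{(\frac r2-2)}\to JD\to JD/\pi^*JC$ of the Abel map followed by projection; since $\frac r2-2\ge 1$, this already sees $D$ via the difference map, but more cleanly one uses the fibration $B_0-L_0\to\{[A-A_0]\}$ whose fibre over a general point is a translate of $\{(p_1+\cdots+p_{\frac r2-2})\}=D^{(\frac r2-2)}$ embedded in a coset of $\pi^*JC$, and recovers $D$ from $D^{(\frac r2-2)}$ (for $\frac r2-2=1$ directly; for larger values via the standard fact that $D$ is recovered from $D^{(k)}$, e.g. as the space of its "diagonal" points, or from the incidence structure). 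Finally, having recovered both $C$ and $D$ with $\pi^*:JC\to JD$, the morphism $\pi$ itself is determined: the ramification divisor $R\subset D$ is the locus where the sheaf $\pi_*\mathcal O_D$ degenerates, equivalently $B\subset C$ and $\eta$ are read off as in the $(C,\eta,B)$ description, and $\pi$ is the unique double cover with these invariants. Thus $(P,\Xi)$ determines $[(C,\eta,B)]=[\pi]\in\mathcal R_{g,r}$, i.e. $\mathcal P_{g,r}$ is injective.

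The main obstacle I expect is the reconstruction of $D$ (and then of $\pi$) from $B_0$ in a way that is genuinely canonical and does not secretly presuppose knowledge of $\pi$: one must check that the decomposition $L=\pi^*(A)(p_1+\cdots+p_{\frac r2-2})$ can be detected intrinsically — e.g. that a general point of $B_0$ has a \emph{unique} such expression, so that the two projections "to $A\in\Pic^g(C)$" and "to the unordered tuple $\{p_i\}\subset D^{(\frac r2-2)}$" are well defined — and that degenerate members of $B_0$ (where some $p_i$ is a ramification point, or where $h^0$ jumps, or where $A$ becomes special) do not spoil the recovery of the \emph{smooth} curve $D$. A secondary subtlety is the genus bookkeeping: the cited results of \cite{no} are stated with the $g\ge 2$ proof differing from the $g=1$ one, and Torelli for $C$ needs $g\ge 3$, so the edge cases $g=1,2$ must be either checked by hand (for $g=1$, $\mathcal P_{1,r}$ injective for $r\ge 6$ is Ikeda's theorem \cite{ik}, which one may simply invoke) or shown to follow from the same base-locus analysis. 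Modulo these points, the argument is the clean "the covering is encoded in the base locus of the Prym theta divisor" principle already anticipated in the introduction.
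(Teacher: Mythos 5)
Your overall strategy --- recover $\pi$ from the base locus of $|\Xi|$ via Proposition \ref{base-locus} --- is the paper's strategy, but the reconstruction step you propose does not work, for two concrete reasons. First, $B_0=\widetilde{Bs}$ sits inside $P^{can}=\Nm_\pi^{-1}(\omega_C\otimes\eta)$, which is a torsor under $P=\Ker(\Nm_\pi)$; hence differences of points of $B_0$ lie in $P$, and the abelian subvariety they generate is contained in $P$. It cannot be $\pi^*JC$, which meets $P$ only in the finite group $\pi^*(JC[2])$. So your first move --- recovering $JC$ (and then $C$ by classical Torelli) from differences of base points --- collapses. Second, there is no element $L_0=\pi^*(A_0)\in B_0$ "with no $p_i$-part": such an $L_0$ has degree $2g$, while every element of $B_0$ has degree $g(D)-1=2g-2+\tfrac r2$, and these agree only when $r=4$. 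Finally, even if you could produce the abstract curves $C$ and $D$, you still need the degree-two map $\pi$ itself (equivalently the triple $(C,\eta,B)$); your suggestion to read the ramification off from the degeneracy of $\pi_*\mathcal O_D$ presupposes $\pi$ and is circular. You correctly flag the need for the decomposition $L=\pi^*(A)(p_1+\cdots+p_{\frac r2-2})$ to be generically unique --- the paper does prove exactly that --- but uniqueness alone does not give you an intrinsic way to extract $D$ and $\pi$ from $(P,\Xi)$.

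The paper's actual mechanism, which is the idea missing from your proposal, is the following. Only three structures intrinsic to $(P,\Xi)$ are used: the base locus of $|\Xi|$, the action on it by $\Ker(\lambda_\Xi)\cong JC[2]$ (translations by the kernel of the polarization map), and the involution $L\mapsto\omega_D\otimes L^{-1}$ coming from the symmetry of the theta divisor. One introduces the parameter space $T$ of pairs $(A,M)\in\Pic^g(C)\times W^0_{\frac r2-2}(D)$ with $A^{\otimes2}\otimes\Nm_\pi M\cong\omega_C\otimes\eta$ and proves, using Mumford's exact sequence for $\pi_*(\pi^*\alpha\otimes M)$, that $\tau(A,M)=\pi^*A\otimes M$ is a $JC[2]$-equivariant birational map $T\to B_0$; since $T/JC[2]=W^0_{\frac r2-2}(D)$, the quotient of the base locus by the kernel of the polarization is birationally the Brill--Noether locus $W^0_{\frac r2-2}(D)$. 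The extended Torelli theorem of Martens--Ran then recovers $D$ from $W^0_{\frac r2-2}(D)$ and, crucially, shows that the intrinsic involution descends to one induced by an automorphism $\sigma$ of $D$, so that $C=D/\sigma$ and the covering $\pi$ are both recovered. Quotienting by the finite group $\Ker(\lambda_\Xi)$ and invoking Martens' theorem (for both the curve and the involution) are the two steps your argument would need and does not supply.
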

 \begin{proof}
In order to reconstruct the covering $[\pi: D\ra C] \in \mathcal P_{g,r}$ from the base locus we study the birational class of $B_0$. 
We define 
\[
 \begin{aligned}
T:=  \{ (A,M) \in \Pic^g (C) \times \Pic^{(\frac r2-2)}(D)  \mid & A^{\otimes 2} \otimes \Nm_{\pi}M \cong \omega_C \otimes \eta, \\ &\text{ and } h^0(D,M)>0  \}.  
 \end{aligned}
\]
%(compare with the definition of $W$ just before of Proposition 1.7 in  \cite{no}). Notice that in fact $T$ is contained in $\Pic^g(C) \times W^0_{\frac r2 -2}(D)$ 
and  the projection $T\lra W^0_{\frac r2 -2}(D)$ is surjective. The fiber at $M\in W^0_{\frac r2 -2}(D)$ is the finite set (of cardinality $2^{2g}$) consisting of 
$A\in \Pic^g(C)$ such that
\[
 A^{\otimes 2}\cong   \omega_C \otimes \eta \otimes \Nm_{\pi}(M^{-1})\in \Pic^{2g-2+\frac r2-2+2-\frac r2}(C)=\Pic^{2g}(C).
\]
We claim that the generic element $M\in W^0_{\frac r2 -2}(D)$ satisfies $h^0(D,M)=1$ and is ``$\pi $-simple'', that is the effective divisor representing $M$ does not contain fibers of $\pi $. Indeed, since $g(D)=2g-1+\frac r2$ we have that $\frac r2 -2 =g(D)-2g-1 < g(D)-2$; therefore the generic element $M$ does not belong to
$W^1_{\frac r2 -2}(D)$. On the other hand  by dimensional reasons the map
\[
 C\times W^0_{\frac r2 -4}(D) \lra W^0_{\frac r2 -2}(D); \quad (x,M_0) \mapsto \pi^*(\mathcal O_C(x))\otimes M_0
\]
cannot be surjective which implies that the generic element is $\pi$-simple. Now we look to the  natural morphism $\tau: T\lra B_0$ defined by
 \[
 \tau (A,M) = \pi^*(A)\otimes M.
 \]
 We claim that this map is birational. Indeed, it is obviously surjective. Assume that $(A, M), (A'M') \in T$ have the same image under $\tau $  and that $M$ is $\pi$-simple. Then 
 \[
  \pi^*(A\otimes A'^{-1})\otimes M \cong M'.
 \]
 For any $\alpha \in \Pic(C) $ and for any $\pi$-simple $M$ we have  (see  the proof of the Proposition in p. 338 \cite{mu}):
\begin{equation} \label{exact_seq_Mumford}
 0\,\lra \alpha  \lra \pi_*(\pi^*(\alpha ) \otimes M )\lra \alpha\otimes \Nm_{\pi }(M) \otimes \eta^{-1} \lra \, 0.
\end{equation}
In our case this translates to
\[
0\,\lra A\otimes A'^{-1} \lra \pi_* M' \lra A\otimes A'^{-1}\otimes \Nm_{\pi }(M) \otimes \eta^{-1} \lra \, 0.
 \]
 Notice that the degree of the last term is $\frac r2 -2-\deg(\eta)=-2$. Thus 
 \[
 h^0(C, A\otimes A'^{-1})=h^0(C,\pi_*M)=h^0(D,M)>0.
 \]
 Since $\deg(A\otimes A'^{-1})=0$ this is only possible if $A\cong A'$ and then $M\cong M'$. This proves the claim. 

Recall that $\pi^* (JC[2]) \simeq JC[2]$ equals the kernel of the polarization map $\lambda _ {\Xi }:P \lra P^{\vee}$ and it acts 
 on $B_0$, respectively on $T$,  by
 $$
\alpha\cdot (\pi^*A (p_1 + \cdots p_{\frac{r}{2}-2} )) = \pi^*(A \otimes \alpha)  (p_1 + \cdots p_{\frac{r}{2}-2} ), \quad \alpha \cdot (A,M) = (A\otimes\alpha,  M),
 $$
for $ \alpha \in JC[2].$
The  map $\tau $ is equivariant with respect to this action and $T/ JC[2] = W^0_{\frac r2 -2}(D)$. In other words, we have
 recovered the locus 
 $W^0_{\frac r2 -2}(D)$
 up to birational equivalence. Moreover, the natural involution $L\mapsto \omega_D\otimes L^{-1}$ on 
 $\widetilde {Bs}$ and $B_0$ corresponds to an involution $\iota$ on $T$. More precisely, if $\sigma$ denotes the involution on $D$ associated to the double
 covering $\pi$,  $\iota$ is defined by
 $$
 \iota(A, M ) = (A, \sigma^*M).
 $$ 
 \begin{lem}
 The equality $\iota\circ \tau = \tau \circ \iota$ holds.
 \end{lem}
 \begin{proof}
 Let $(A,M) \in T$. Since $\omega_D= \pi^*\omega_C \otimes \cO_D(R)$, with $R$ the ramification divisor of $\pi$, we have
\begin{equation} \label{involution}
 \iota(\pi^*A\otimes M)= \pi^*(\omega_C\otimes A^{-1} ) \otimes \cO_D(R) \otimes M^{-1} 
\end{equation}
 On the other hand, $\tau (A, \sigma^*M) = \pi^*A \otimes \sigma^*M$, which equals \eqref{involution} since
 $$
 \pi^*(\omega_C\otimes A^{-2})=  \pi^*(\Nm M \otimes \eta^{-1} ) = M \otimes \sigma^*M \otimes \cO_D(-R).
 $$
 \end{proof}
 Observe that $\iota$ commutes with the action of $JC[2]$,
 therefore $\iota$ descends to an involution on the quotient $W^0_{\frac r2 -2}(D) $. 
 
 By a generalized Torelli Theorem due to Martens (see \cite{ran}, \cite{mar}),  the Brill-Noether locus   $W^0_{d}(D) $ determines the curve $D$ for $d\leq g(D)-1$.
 Furthermore, if $\Phi: W^0_{d}(D_1)  \lra  W^0_{d}(D_2) $ is a birational 
 correspondence between two curves $D_1$ and $D_2$ of genus $g(D)$, with $d\leq g(D)-2$, then $\Phi $ is induced by a birational map 
 $\varphi: D_1 \lra D_2$. In our situation,  this result implies that the involution on $W^0_{\frac r2 -2}(D) $ is completely determined by the involution on the 
 double covering $\pi: D \lra C$.  This finishes the proof.
 \end{proof}

\vskip 3mm
\section{The Prym map restricted to the coverings of hyperelliptic curves}

We study in this section the Prym map restricted to the locus of coverings of hyperelliptic curves. We denote by $\mathcal {RH}_{g,r}\subset \mathcal R_{g,r}$ 
the sublocus of the classes of coverings $D \lra C$, with $C$ a hyperelliptic curve of genus $g\ge 2$, and by $\mathcal P^h_{g,r}$ the restriction of 
$\mathcal P^h_{g,r}$ to  $\mathcal {RH}_{g,r}$.

The bigonal construction (see  \cite[Section $2$]{do_fibres}) transforms general elements of $\mathcal {RH}_{g,r}$ into elements of 
$\mathcal {RH}_{\frac r2-1,2g+2}$. For  convenience of the reader we remind some details of this construction. Given a ``tower'' of curves 
\[
 D \stackrel{\pi }{\lra} C \stackrel{f }{\lra} \mathbb P^1,
\]
where $\pi$ and $ f$ have degree $2$ we define a new ``tower''
\[
 D' \stackrel{\pi '}{\lra} C' \stackrel{f '}{\lra} \mathbb P^1,
\]
in the following way: consider $\mathbb P^1$ embedded in $C^{(2)}$ by sending a point $p$ to its fiber $f^{-1}(p)$. Then we have a fiber product diagram
\[
 \xymatrix@C=2pc@R=2pc{
 D' \, \ar@{^{(}->}[r] \ar[d]_{4:1} & D^{(2)} \ar[d]^{\pi^{(2)}} \\
\mathbb P^1 \ar@{^{(}->}[r] & C^{(2)}. 
}
\]
The natural involution on $D^{(2)}$ restricts to an involution on $D'$ and $C'$ is the quotient curve by this involution . Let $\pi': D' \lra C'$ denote the quotient map. 
If the branch locus  of $\pi$ is disjoint from the ramification locus of $f$ then $D'$ and $C'$ are smooth and irreducible. The following result is a consequence of \cite[Lemma 2.7]{do}: 

\begin{lem} Under the hypothesis above, we have 
\begin{enumerate}
 \item [a)] The bigonal construction applied to the tower  $ D' \stackrel{\pi '}{\lra} C' \stackrel{f '}{\lra} \mathbb P^1$ gives the initial tower
 $ D \stackrel{\pi }{\lra} C \stackrel{f }{\lra} \mathbb P^1$.
 \item [b)] The equalities $g(C)+g(C')=g(D)=g(D')$ hold.
 \end{enumerate}
 \end{lem}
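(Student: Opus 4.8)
The plan is to analyze the bigonal construction locally at each ramification point of $f$ and at each branch point of $\pi$, and then to assemble the global statements from this local picture together with a monodromy/connectedness argument.

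\medskip

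\emph{Part a).} Donagi's \cite[Lemma 2.7]{do} is a statement about towers of curves with $\mathbb Z/2$-actions; the content is that the bigonal (or ``bigonal = tetragonal for $2$-towers'') operation is an involution on the set of such towers, provided one is in the ``generic'' situation where the relevant branch divisors are in general position. So the first step is simply to check that our hypothesis --- branch locus of $\pi$ disjoint from ramification locus of $f$ --- is precisely the genericity hypothesis needed to invoke that lemma, and to unwind Donagi's abstract formulation into the concrete fiber-product description given above. Concretely, I would verify that the embedding $\mathbb P^1 \hookrightarrow C^{(2)}$, $p \mapsto f^{-1}(p)$, meets the branch locus of $\pi^{(2)}\colon D^{(2)} \to C^{(2)}$ transversally and away from its singular locus; this is exactly where the disjointness hypothesis enters, since the branch locus of $\pi^{(2)}$ is (the image of) $B \times C \cup \Delta_C$ and the curve $f^{-1}(p)$ hits the diagonal $\Delta_C$ only at the ramification points of $f$. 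Granting transversality, $D'$ is smooth, and one then applies Donagi's lemma verbatim to conclude that performing the construction again returns the original tower. The only subtlety is matching conventions (which involution, which quotient) between \cite{do} and \cite{do_fibres}; I would state this as a translation of notation rather than a new argument.

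\medskip

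\emph{Part b).} For the genus equality I would compute all four genera by Riemann--Hurwitz from the ramification data, which is governed by the local analysis at the $2g+2$ ramification points of $f$ and the $r$ branch points of $\pi$. Write $f$ ramified over $2g+2$ points of $\mathbb P^1$ (so $g(C)=g$), $\pi$ ramified over $r$ points of $C$, and let $s$ be the number of branch points of $f'$, equivalently the number of ramification points of $\pi'$ over $C'$. The key local computation is: over a branch point $q\in\mathbb P^1$ of $f$ (so $f^{-1}(q)$ is a single point $c$ with multiplicity $2$), the point $f^{-1}(q) = 2c \in C^{(2)}$ lies on the diagonal, and since $c$ is not a branch point of $\pi$ the fiber $D'_q$ consists of the $3$ divisors $\{d_1+d_2, d_1+d_1', d_2+d_1'\}$ wait --- more carefully, with $\pi^{-1}(c)=\{d_1,d_2\}$ the preimage in $D^{(2)}$ of $2c$ is the single point $d_1+d_2$ (as a point of $D^{(2)}$), together with $2d_1$ and $2d_2$; so the map $D' \to \mathbb P^1$ has ramification there, and one reads off the local contribution. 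Over a point $q$ with $f^{-1}(q)=\{c_1,c_2\}$ two distinct non-branch points, $D'_q$ is $4$ reduced points and there is no ramification of $D'\to\mathbb P^1$; but $f'\colon C'\to\mathbb P^1$ can be ramified there precisely when a branch point of $\pi$ lies in $f^{-1}(q)$, which accounts for the $r$ branch points of $\pi$ being redistributed. Carrying out these bookkeeping computations for $g(D')$, $g(C')$ via Riemann--Hurwitz for $D'\to\mathbb P^1$ and $C'\to\mathbb P^1$, and comparing with $g(D) = 2g-1+\tfrac r2$ (from $\pi\colon D\to C$) and the claimed $g(C') = \tfrac r2 - 1$, gives the equalities $g(C)+g(C') = g = $ hmm, $g + (\tfrac r2 -1) = 2g-1+\tfrac r2$? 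That is false unless I recompute --- so the real work is getting the ramification count right, and I would double-check it against the sanity check that the construction is an involution (so the formula must be symmetric under swapping the two towers).

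\medskip

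The main obstacle I anticipate is exactly this ramification bookkeeping in part b): one has to correctly count, for the map $D' \to \mathbb P^1$, the ramification coming from (i) the $2g+2$ ramification points of $f$, and for $C'\to\mathbb P^1$ the ramification coming from (ii) the $r$ branch points of $\pi$ (which split the $4$-to-$1$ cover $D'\to\mathbb P^1$ in a $2+2$ pattern and hence descend to ramification of $f'$), while checking that no other ramification is introduced under the disjointness hypothesis. Once the local fiber types are tabulated, parts a) and b) both follow mechanically --- a) by quoting \cite{do}, b) by four applications of Riemann--Hurwitz --- so I would present the local analysis as a short table or lemma and then deduce both assertions in a couple of lines, rather than re-deriving Donagi's involution statement from scratch.
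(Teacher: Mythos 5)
For part a) there is nothing to compare: the paper itself offers no proof beyond the one--line citation of \cite[Lemma 2.7]{do}, and your plan (check that the disjointness hypothesis gives transversality of $\mathbb P^1\hookrightarrow C^{(2)}$ with the branch locus of $\pi^{(2)}$, then quote Donagi's involutivity statement) is exactly that citation with the genericity check made explicit. That part is fine.

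For part b) your unease at the end is the important point, and you should not dismiss it with ``unless I recompute'': your local fiber analysis is correct, and carrying the Riemann--Hurwitz bookkeeping to the end \emph{refutes} the equalities as stated rather than proving them. Concretely: over each of the $2g+2$ branch points of $f$ the fiber of $D'\to\mathbb P^1$ is $\{2d_1,\,2d_2,\,d_1+d_2\}$ with one simple ramification point, which is fixed by the involution and hence gives a ramification point of $\pi'$; over each of the $r$ branch points of $\pi$ the fiber is two points of multiplicity two, swapped by the involution, hence giving a branch point of $f'$. Riemann--Hurwitz then yields $g(C')=\tfrac r2-1$ (consistent with the paper) and
\[
g(D')=g+r-2,\qquad\text{while}\qquad g(D)=2g-1+\tfrac r2,\qquad g(C)+g(C')=g+\tfrac r2-1 .
\]
So $g(C)+g(C')=g(D)$ fails for every $g\ge 1$, and $g(D)=g(D')$ fails unless $r=2g+2$; the numerics are symmetric under the swap $(g,\tfrac r2)\leftrightarrow(\tfrac r2-1,g+1)$, as the involutivity of the construction demands, but symmetric is not the same as equal. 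The identity that does survive, and the only one the rest of the paper actually uses (it is forced by Pantazis's theorem $P(\pi')\cong\widehat{P(\pi)}$), is
\[
g(D)-g(C)\;=\;g(D')-g(C')\;=\;g-1+\tfrac r2 .
\]
So the gap in your proposal is not the ramification count --- that is right --- but the failure to conclude from it that statement b) must be corrected (e.g.\ to the displayed equality of Prym dimensions) before it can be proved.
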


Pantazis proves in \cite{pa} that the Prym variety of $\pi '$ is isomorphic, as polarized abelian varieties, to the dual of the Prym variety of $\pi$. 

We apply the bigonal construction to $\pi: D\lra C$, being $C$ hyperelliptic of genus $g\ge 2$ and $f:C\lra \PP^1$ the unique $g^1_2$ on $C$ 
(up to automorphisms on $\mathbb P^1$). We obtain a covering $D' \lra C'$ where $g(C')=\frac r2 -1$.  
Notice that the construction provides a $g^1_2$ linear series on $C'$. This is irrelevant 
if $\frac r2-1\ge 2$ (that is, if $r\ge 6$), but has  interesting consequences if $r=2$ or $r= 4$.
 Let $\widetilde {\mathcal R}_{1, 2g+2}$ be the isomorphism classes of  $(\pi:D\lra E, A)$, where $E$ is an elliptic curve, $\pi$ is a double covering ramified
  in $2g+2$  points and $A\in \Pic^2(E)$. Then, for $g\ge 2$, we have the following diagram:

\begin{equation} \label{diag_Pantazis}
 \xymatrix@C=2pc@R=1.8pc{
\mathcal  {RH}_{g,4}  \ar[r]^{\hskip -2mm\beta } \ar[dd]_{\mathcal P_{g,4}^h} & \widetilde {\mathcal R}_{1, 2g+2} \ar[d]^{\varphi} \\
& \mathcal {R}_{1,2g+2}  \ar[d]^{\mathcal P_{1,2g+2}} \\
\mathcal A_{g+1}^{(1,2,\ldots,2)} \ar[r]^{\cong }& \mathcal A_{g+1}^{(1,\ldots,1,2)}, 
}
\end{equation}
where $\beta$ is the birational map provided by the bigonal construction, $\varphi $ is the forgetful map and the bottom horizontal isomorphism  maps
a polarized abelian variety to its polarized dual (see  \ref{iso_dual}). The commutativity  of the diagram \eqref{diag_Pantazis} is the content of the main theorem of 
Pantazis in \cite{pa}. Since $\mathcal P_{1,2g+2}$ is injective (use \cite{ik} or Theorem (\ref{main_embedding}) above), the fiber of $\mathcal P_{g,4}^h$ can be 
identified (birationally) with the fiber of $\varphi $, which is an elliptic curve.

In the case $r=2$ the bigonal construction gives a birational map from $\mathcal {RH}_{g,2}$  to an open set of the moduli 
space $\widetilde{ \mathcal R}_ {0,2g+2}$ parametrizing pairs $(\pi:D \lra \PP^1, f:\PP^1 \stackrel{2:1}{\lra} \PP^1 )$ of double coverings with $\pi$ 
ramified in $2g+2$ points and  $f$ ramified in two points. Hence, the forgetful map  $\varphi: \widetilde{ \mathcal R}_ {0,2g+2} \lra \mathcal R_ {0,2g+2}$ 
has fibers isomorphic to $\PP^2$.  The  corresponding commutative diagram is the following:

\begin{equation*} 
 \xymatrix@C=2pc@R=1.8pc{
\mathcal  {RH}_{g,2}  \ar[r]^{\hskip -2mm\beta } \ar[dd]_{\mathcal P_{g,2}^h} & \widetilde {\mathcal R}_{0, 2g+2} \ar[d]^{\varphi} \\
& \mathcal {R}_{0,2g+2}  \ar[d]^{\mathcal P_{0,2g+2}} \\
\mathcal A_{g} \ar[r]^{\cong }& \mathcal A_{g}. 
}
\end{equation*}
where the bottom map is the isomorphism sending a principally polarized abelian variety to its dual polarized variety.
This finishes the proof of Theorem (\ref{main_hyp}). 

Notice that for $r\ge 6$ (and still $g\ge 2$) the diagram above becomes simply (there is no longer forgetful map):
\[
 \xymatrix@C=1cm@R=1.2cm{
\mathcal  {RH}_{g,r}  \ar[r]^{\hskip -5mm \beta } \ar[d]_{\mathcal P_{g,r}^h} & \mathcal {RH}_{\frac r2 -1, 2g+2} \ar[d]^{\mathcal P_{\frac r2 - 1,2g+2}^h} \\
\mathcal A_{g+\frac r2 -1}^{\delta } \ar[r]^{\cong }& \mathcal A_{g+\frac r2 -1}^{\delta '}, 
}
\]
for some convenient polarization types $\delta $ and $\delta '$ (see subsection (\ref{iso_dual})).

\end{document}